\newtheorem{thm}{Theorem}
\newtheorem{cor}[thm]{Corollary}
\newtheorem{lem}[thm]{Lemma}
\newtheorem{prop}[thm]{Proposition}
\author{Vytas Zacharovas}
\title{On the exponential decay   of the characteristic function of the quicksort distribution}%
\address{	Department of Mathematics and Informatics\\
	Vilnius University\\Naugarduko 24\\Vilnius, Lithuania\\E-mail: vytas.zacharovas@mif.vu.lt}
\keywords{Quicksort, characteristic function, density, Laplace transform, analytic continuation}
\begin{document}
%



\maketitle
\begin{abstract}
	\paragraph{Abstract.}
	We prove that the characteristic function of the quicksort distribution is exponentially decreasing at infinity. As a consequence it follows that the density of the quicksort distribution can be analytically extended to the vicinity of the real line.
\end{abstract}

\section{Introduction}

Let $X_n$ be the number of steps required by Quicksort algorithm to sort the list of values \(\sigma(1),\sigma(2),\ldots,\sigma(n)\) where \(\sigma\) is a random permutation chosen with uniform probability from the set of all permutations \(S_n\) of order \(n\).
It has been proven by \cite{regnier_1989} and  \cite{rosler} that
the appropriately  scaled distribution of \(X_n\) converges to some limit law
\[
\frac{X_n-\mathbb{E}X_n}{n}\to^d Y
\]
as \(n\to \infty\).
Let us denote as \(f(t)\) the characteristic function of the limiting distribution 
\[
f(t)=\mathbb{E}e^{it Y}
\]
\cite{tan_hadjicostas} proved that the characteristic
function $f(t)$ has a density $p(x)$.  \cite{knessl_szpankowski} using heuristic approach established a number of very precise estimates for the behavior of \(p(x)\) at infinity. Later 
\cite{fill_janson_2000} showed that the characteristic function
$f(t)$ of the limit quicksort distribution together with its all
derivatives decrease faster than any polynomial at infinity. More
precisely they showed that for all  real $p>0$ there is such a
constant $c_p$ that
\[
|f(t)|\leqslant \frac{c_p}{|t|^p},\quad \hbox{for all} \quad t\in
\mathbb{R}.
\]
They also proved that
\[
c_p\leqslant 2^{p^2+6p}.
\]
Hence
\[
|f(t)|\leqslant \inf_{p>0}\frac{2^{p^2+6p}}{|t|^p}.
\]
The infimum in the above inequality can be evaluated as
\[
|f(t)|\leqslant
\inf_{p>0}\frac{2^{p^2+6p}}{|t|^p}\leqslant |t|^3 e^{-\frac{\log^2
|t|}{4\log 2}}.
\]

The main result of this paper is the following theorem stating that the characteristic function \(f(t)\) of limiting Quicksort distribution decreases exponentially at infinity.

\begin{thm}
	\label{main_thm}
There is a constant $\eta>0$ such that 
\[
f(t)=O(e^{-\eta|t|})
\]
as $|t|\to \infty$. 
\end{thm}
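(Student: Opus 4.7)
The natural starting point is Rösler's distributional identity $Y\stackrel{d}{=}UY^{(1)}+(1-U)Y^{(2)}+C(U)$ with $C(u)=1+2u\log u+2(1-u)\log(1-u)$, which yields the functional equation
\[
f(t)=\int_0^1 e^{itC(u)}f(ut)f((1-u)t)\,du.
\]
The plan is to use this equation to bootstrap the super-polynomial bound of \cite{fill_janson_2000} up to exponential decay. Setting $M(T):=\sup_{|t|\ge T}|f(t)|$, the target is $M(T)=O(e^{-\eta T})$ for some $\eta>0$.

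First I would split the $u$-integration into a central part $[\delta,1-\delta]$ and two boundary parts $[0,\delta]$ and $[1-\delta,1]$. On the central part both arguments $ut$ and $(1-u)t$ have modulus at least $\delta T$, so the contribution is bounded by $M(\delta T)^2$, which is of the right order once exponential decay has been propagated to scale $\delta T$. The difficulty lies in the boundary, where one of the arguments of $f$ is small, so that the trivial bound $|f|\le 1$ on that factor produces only a linear-in-$\delta$ loss against $M((1-\delta)T)$ on the other factor --- not nearly enough to propagate exponential decay.

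The boundary regions should be handled by repeated integration by parts in $u$, exploiting that $C'(u)=2\log(u/(1-u))$ diverges logarithmically at the endpoints. Each integration by parts produces a factor $1/(itC'(u))$ acting on derivatives of the integrand; the $k$-th derivative of $f(ut)f((1-u)t)$ in $u$ grows like $t^k$ by Leibniz, but is then divided by $|tC'(u)|^k$, leaving a net gain of order $|\log(1/u)|^{-k}$ from the oscillation. Boundary terms emerging at each step involve $f$ and its derivatives evaluated at $u=\delta$; these are controlled by applying the derivative version of the \cite{fill_janson_2000} estimate (their integration-by-parts machinery also bounds $f^{(j)}$). Choosing $\delta$ small and the number of integrations by parts proportional to $T$ collapses the boundary contribution to $O(e^{-\eta T})$.

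Combining the two estimates yields a self-referential inequality of the shape $M(T)\le M(\delta T)^2+C_0 e^{-\eta T}$ which, seeded with the super-polynomial estimate, iterates down to $M(T)=O(e^{-\eta T})$. The principal obstacle is quantitative: one must verify that the $k!$-type combinatorial growth arising from Leibniz differentiation of $f(ut)f((1-u)t)$ is genuinely beaten by the $|\log(1/\delta)|^{-k}$ gain supplied by the oscillatory phase, which demands Gevrey-type control on successive derivatives of $f$ at large scales. Pushing this balance far enough to produce a strictly positive $\eta$ is the technical heart of the argument.
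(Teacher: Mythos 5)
Your plan has two genuine gaps, and at least the first one is fatal to the scheme as written. First, the central-part estimate does not propagate exponential decay. Bounding the contribution of $u\in[\delta,1-\delta]$ by $M(\delta T)^2$ and iterating $M(T)\le M(\delta T)^2$ gives, for $\delta<\tfrac12$, only a stretched exponential $M(T)\le \exp\bigl(-cT^{\alpha}\bigr)$ with $\alpha=\log 2/\log(1/\delta)<1$; the rate $2\eta\delta T$ in $M(\delta T)^2=C^2e^{-2\eta\delta T}$ is strictly worse than $\eta T$ whenever $\delta<\tfrac12$. To get a true exponential you must instead use that the arguments sum to $t$, i.e.\ bound the central part by $\int_\delta^{1-\delta}C e^{-\eta u t}\,C e^{-\eta(1-u)t}\,du=(1-2\delta)C^2e^{-\eta t}$ --- and then the induction fails to close because the constant squares ($C^2\not\le C$ for the large $C$ forced by the induction base on bounded $t$). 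Some strict multiplicative gain must be extracted to defeat $C^2/C$, and your sketch does not identify one. Second, the boundary analysis you defer is precisely where the argument breaks rather than a technicality: each integration by parts in $u$ gains only a bounded factor of order $1/(2\log(1/\delta))$ while the Leibniz differentiation of $f(ut)f((1-u)t)$ costs moment factors $\mathbb{E}|Y|^{j}$, which grow factorially (the best one can say is $\mathbb{E}|Y|^{j}=O(j!\lambda^{-j})$ from finiteness of exponential moments). Taking $k\propto T$ integrations by parts therefore requires $\log(1/\delta)\gtrsim T$ to beat the factorial, i.e.\ $\delta\lesssim e^{-cT}$, which empties the central region of any useful decay. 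The needed ``Gevrey-type control'' you invoke is essentially equivalent to the exponential decay you are trying to prove, so the argument is circular at its heart.

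The paper avoids all of this by leaving the real line entirely. It takes the Laplace transform $\psi(s)=\int_0^\infty f(t)e^{2it\log t}e^{-st}\,dt$, for which the convolution structure of the functional equation becomes the shift-differential equation $-\psi'(s)=\psi^2(s-i)$. That equation yields factorial bounds on all derivatives of $\psi$ in terms of $\psi$ itself, hence analytic continuation of $\psi$ across the imaginary axis into a half-plane $\Re(s)>-\eta'$ with $\psi(s)=O(1/|s|)$ there; the crucial nontrivial input is the strict inequality $\sup_y|\psi(1+iy)|<1$, proved by the classical lattice-distribution argument. Exponential decay of $f$ then falls out of shifting the Laplace-inversion contour to $\Re(s)=-\eta$, the rate $\eta$ being the width of the strip of analyticity rather than the output of any iteration over scales. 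If you want to salvage a real-variable bootstrapping proof you would at minimum need a mechanism producing a constant strictly less than $1$ at each step of the recursion; the paper's strict bound $\sup_y|\psi(1+iy)|=1-\varepsilon<1$ plays exactly that role on the transform side.
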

 \begin{cor}
 	\label{cor}
 	Quicksort distribution has a bounded density that can be extended
 	analytically to the vicinity of the real line $|\Im(s)|<\eta$. Where \(\eta\) is the same positive number as in the formulation of Theorem \ref{main_thm}.
 \end{cor}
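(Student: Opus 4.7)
The plan is to derive the corollary directly from Theorem \ref{main_thm} by Fourier inversion, exploiting the exponential decay of $f(t)$ to push the inversion integral into the complex plane.

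First, because $|f(t)| = O(e^{-\eta|t|})$ and $f$ is continuous, $f \in L^{1}(\mathbb{R})$, so Fourier inversion applies and the density is given explicitly by
\[
p(x) = \frac{1}{2\pi}\int_{-\infty}^{\infty} e^{-itx} f(t)\,dt.
\]
Boundedness then follows at once from the trivial estimate $|p(x)| \leq \frac{1}{2\pi}\int_{-\infty}^{\infty} |f(t)|\,dt < \infty$.

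For the analytic extension I would replace the real variable $x$ by a complex variable $s$ lying in the strip $|\Im(s)| < \eta$ and define
\[
p(s) = \frac{1}{2\pi}\int_{-\infty}^{\infty} e^{-its} f(t)\,dt.
\]
For such $s$ the integrand is dominated by $|f(t)|\,e^{|\Im(s)|\,|t|} = O\bigl(e^{-(\eta - |\Im(s)|)|t|}\bigr)$, which is integrable; hence the integral converges absolutely and uniformly on compact subsets of the strip. Analyticity in $s$ is then obtained either by differentiation under the integral sign, noting that the formal derivative introduces only an extra factor $-it$ that is absorbed by the exponential decay, or by a Morera/Fubini argument over a triangle inside the strip. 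Either way the restriction $|\Im(s)| < \eta$ enters exactly once and only from requiring $\eta - |\Im(s)| > 0$, which explains why the corollary inherits the same constant $\eta$ as in Theorem \ref{main_thm}.

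There is no real obstacle in this argument: the work has already been done in Theorem \ref{main_thm}, and what remains is the standard observation that an $L^{1}$ function whose Fourier transform decays exponentially extends holomorphically to a horizontal strip whose width is precisely the decay rate. The only mild care needed is to verify absolute integrability uniformly on compacta in the strip before invoking differentiation under the integral, which is immediate from the dominated convergence theorem applied to the bound above.
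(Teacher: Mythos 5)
Your argument is correct and is essentially the same as the paper's: both use the Fourier inversion formula, observe that the exponential decay of $f$ makes the integral with $e^{-ist}$ absolutely convergent for $|\Im(s)|<\eta$, and conclude boundedness and analyticity in that strip. You simply spell out the standard justifications (domination on compacta, Morera/differentiation under the integral) that the paper leaves implicit.
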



\section{Proofs}
It has been shown in \cite{rosler} that the characteristic function \(f(t)\) satisfies the functional equation
\[
f(t)=e^{it}\int_0^1f(tx)f\bigl(t(1-x)\bigr)e^{2itx\log
	x+2it(1-x)\log(1-x)}\,dx
\]
which after a change of variables $x\to y/t$ becomes
\[
tf(t)e^{2it\log t}=e^{it}\int_0^tf(y)f(t-y)e^{2iy\log
	y+2i(t-y)\log(t-y)}\,dy
\]
It follows hence by taking Laplace transform of the both sides that
function
\[
\psi(s)=\int_0^\infty f(t)e^{2it\log t}e^{-st}\,dt
\]
satisfies an equation
\begin{equation}
\label{shift_diff_eq}
-\psi'(s)=\psi^2(s-i).
\end{equation}
The Laplace transform \(\psi(s)\) together with the above differential equation will be the main tool of proving the result stated in the introduction.

It is well known that the quicksort distribution has finite moments
of all orders. In the following analysis we will only need the fact
that it has finite first moment, which implies that $|f'(t)|$ is
bounded. Thus integrating by parts we conclude that
\begin{equation}
\label{upper_bound_for_psi}
\begin{split}
\psi(s)&=\int_0^\infty f(t)e^{2it\log t}
e^{-st}\,dt
\\
&=\frac{1}{s}+\frac{1}{s}\int_0^\infty\bigl( f'(t)e^{2it\log
t}+f(t)e^{2it\log t}(2i\log t+2i)\bigr) e^{-st}\,dt
\\
&\leqslant \frac{A}{|s|}\left(1+\frac{|\log \Re s|}{\Re s}\right),
\end{split}
\end{equation}
for all \(s\) lying in the right half-plane $\Re s>0$ and \(A>0\) being some positive absolute constant.
\begin{lem}
	\label{lem_ineq} For all \(s\) lying in the right half-plane $\Re s>0$  and all integer \(n\geqslant 0\) holds the inequality
\[
\bigl|\psi^{(n)}(s)\bigr|\leqslant n!\left(\max_{
r\in \{0,1,\ldots,n\}}\bigl|\psi(s-ir)\bigr|\right)^{n+1}
\]
\end{lem}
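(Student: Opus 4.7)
The plan is to prove the inequality by induction on $n$, leveraging the shift-differential equation $-\psi'(s)=\psi(s-i)^2$ repeatedly.

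The base case $n=0$ is trivial since the inequality reduces to $|\psi(s)|\leqslant|\psi(s)|$. For the induction step, assume the claim holds for all non-negative integers strictly less than $n$. The key observation is that by differentiating the functional equation $n-1$ times and applying the Leibniz product rule,
\[
\psi^{(n)}(s)=-\frac{d^{n-1}}{ds^{n-1}}\psi(s-i)^2=-\sum_{k=0}^{n-1}\binom{n-1}{k}\psi^{(k)}(s-i)\,\psi^{(n-1-k)}(s-i).
\]
Note $\Re(s-i)=\Re s>0$, so the induction hypothesis is applicable at the point $s-i$.

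Next I would set $M:=\max_{r\in\{0,1,\ldots,n\}}|\psi(s-ir)|$ and observe the crucial containment of maximization sets: the induction hypothesis applied to $\psi^{(k)}(s-i)$ involves values $|\psi(s-i-ir)|=|\psi(s-i(r+1))|$ for $r\in\{0,\ldots,k\}$, i.e.\ indices in $\{1,\ldots,k+1\}\subseteq\{0,\ldots,n\}$. Hence the relevant maximum is at most $M$, giving $|\psi^{(k)}(s-i)|\leqslant k!\,M^{k+1}$ and similarly $|\psi^{(n-1-k)}(s-i)|\leqslant(n-1-k)!\,M^{n-k}$.

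Plugging these bounds into the Leibniz expansion and using the identity $\binom{n-1}{k}k!(n-1-k)!=(n-1)!$ collapses the sum:
\[
|\psi^{(n)}(s)|\leqslant\sum_{k=0}^{n-1}(n-1)!\,M^{n+1}=n!\,M^{n+1},
\]
which is precisely the desired inequality. There is no real obstacle here once one notices that the shift in the ODE matches up perfectly with the shift-indexed maximum in the statement; the main thing to verify carefully is the set containment argument ensuring that the induction hypothesis at the shifted point is controlled by the max at the original point.
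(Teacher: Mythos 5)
Your proof is correct and follows essentially the same route as the paper: induction on $n$, differentiating the shift-differential equation $-\psi'(s)=\psi^2(s-i)$ via the Leibniz rule, and noting that the shifted maximization set $\{1,\ldots,k+1\}$ is contained in $\{0,\ldots,n\}$ so the inductive bounds collapse to $n!\,M^{n+1}$. No gaps.
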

\begin{proof}
The proof is done by applying mathematical induction on \(n\) and using the fact that the differential equation for $\psi(s)$ allows us to express
the derivatives  $\psi^{(n)}(s)$ as a polynomial function of
$\psi(s-ik)$ with $0\leqslant k\leqslant n$.

Indeed, for \(n=0\) the above inequality becomes an identity. Suppose this identity holds for all \(n\) not exceeding \(m\). Let us consider now \(n=m+1\). Replacing the first derivative of \(\psi(s)\) by \(-\psi^2(s-i)\) we obtain
\[
\begin{split}
\psi^{(m+1)}(s)&=\bigl(\psi'(s)\bigr)^{(m)}=-\bigl(\psi^2(s-i)\bigr)^{(m)}
\\
&=-\sum_{k=0}^{m}\binom{m}{k}\psi^{(k)}(s-i)\psi^{(m-k)}(s-i).
\end{split}
\]
Thus applying the inductive hypothesis to the derivatives of \(\psi(s-i)\) we get
\[\begin{split}
\bigl|\psi^{(m+1)}(s)\bigr|&\leqslant \sum_{k=0}^{m}\binom{m}{k}k!\left(\max_{
	r\in \{0,1,\ldots,k\}}\bigl|\psi(s-i-ir)\bigr|\right)^{k+1}
(m-k)!\left(\max_{
	r\in \{0,1,\ldots,m-k\}}\bigl|\psi(s-i-ir)\bigr|\right)^{m-k+1}
\\
&\leqslant (m+1)!\left(\max_{
	r\in \{0,1,\ldots,n\}}\bigl|\psi(s-ir)\bigr|\right)^{m+2}.
\end{split}
\]
The last inequality is the same as stated in the lemma with \(n=m+1\). This completes the proof of the lemma
\end{proof}
\begin{lem} 
	\label{lem_psi}For all \(s\) lying in the lower part of the right half-plane \(\Re s>0\) and \(\Im s<0\) holds the inequality
	
\[
\bigl|\psi^{(n)}(s)\bigr|\leqslant n!\left(\frac{C(\sigma)}{|s|}\right)^{n+1}
\]
Where \(\sigma=\Re s\) and
\[
C(\sigma)=A\left(1+\frac{|\log \sigma|}{\sigma}\right)
\]
with some absolute constant \(A>0\).
\end{lem}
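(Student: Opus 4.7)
The plan is to combine the previous Lemma \ref{lem_ineq} with the pointwise bound \eqref{upper_bound_for_psi} on $\psi$. Lemma \ref{lem_ineq} gives
\[
\bigl|\psi^{(n)}(s)\bigr|\leqslant n!\left(\max_{r\in\{0,1,\ldots,n\}}\bigl|\psi(s-ir)\bigr|\right)^{n+1},
\]
so it suffices to show $|\psi(s-ir)|\leqslant C(\sigma)/|s|$ uniformly in $r\in\{0,1,\ldots,n\}$. Two facts are needed: (i) the points $s-ir$ stay in the right half-plane where \eqref{upper_bound_for_psi} applies, and (ii) these points move away from the origin as $r$ grows, so the denominator $|s-ir|$ can be replaced by the smaller $|s|$.

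First, since $r\geqslant 0$ is real, $\Re(s-ir)=\Re s=\sigma>0$, so \eqref{upper_bound_for_psi} applies and gives
\[
\bigl|\psi(s-ir)\bigr|\leqslant \frac{A}{|s-ir|}\left(1+\frac{|\log\sigma|}{\sigma}\right)=\frac{C(\sigma)}{|s-ir|}.
\]
Next, exploit the hypothesis $\Im s<0$: writing $s=\sigma+i\tau$ with $\tau<0$, we have
\[
|s-ir|^2=\sigma^2+(\tau-r)^2=\sigma^2+(|\tau|+r)^2\geqslant \sigma^2+\tau^2=|s|^2,
\]
because $r\geqslant 0$ only pushes $s-ir$ further down into the lower half-plane, away from the real axis. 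Hence $|s-ir|\geqslant |s|$ for every $r\in\{0,1,\ldots,n\}$, and consequently
\[
\max_{r\in\{0,1,\ldots,n\}}\bigl|\psi(s-ir)\bigr|\leqslant \frac{C(\sigma)}{|s|}.
\]
Substituting this into the inequality from Lemma \ref{lem_ineq} yields the claim.

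This argument is essentially just an assembly of two previously established facts; I don't expect any real obstacle. The only point that has to be verified carefully is the geometric monotonicity in step (ii), which is exactly where the sign assumption $\Im s<0$ enters — without it, the points $s-ir$ would cross the real axis and $|s-ir|$ could become smaller than $|s|$, breaking the uniform denominator bound.
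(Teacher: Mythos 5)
Your proof is correct and follows exactly the paper's own argument: apply Lemma \ref{lem_ineq}, bound each $|\psi(s-ir)|$ by $C(\sigma)/|s-ir|$ via \eqref{upper_bound_for_psi} (noting $\Re(s-ir)=\sigma$), and use $\Im s<0$ to conclude $|s-ir|\geqslant|s|$. The explicit computation $|s-ir|^2=\sigma^2+(|\tau|+r)^2\geqslant|s|^2$ is precisely the geometric fact the paper invokes.
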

\begin{proof} 
	Our upper bound (\ref{upper_bound_for_psi}) for $\psi(s)$
	implies that for \(\Re s>0\) and \(\Im s<0\)  we have
	\[
	\max_{
		r\in \{0,1,\ldots,n\}}\bigl|\psi(s-ir)\bigr|\leqslant \max_{
		r\in \{0,1,\ldots,n\}} \frac{C(\sigma)}{|s-ir|} \leqslant \frac{C(\sigma)}{|s|}.
	\]
Since imaginary part of \(s\)	is negative so \(|s-ir|\geqslant |s|\). Using this inequality to evaluate the right hand side of the inequality of Lemma \ref{lem_ineq} we complete the proof of the lemma.

\end{proof}
\begin{prop}
The function $\psi(s)$ can be continued analytically to the whole
complex plane. Moreover, for all \(s\)  belonging to the lower half-plane \(\Im(s)<0\)  and \(\Re s\geqslant -B\) with any fixed \(B>0\) holds the estimate
\[
\psi(s)=O_B(1/|s|)
\]

\end{prop}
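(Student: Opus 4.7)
The plan is to majorize the Taylor series of $\psi$ at a carefully chosen basepoint in the lower right quadrant, using the derivative bounds of Lemma~\ref{lem_psi}. This simultaneously furnishes the analytic continuation past the imaginary axis and the claimed $O_B(1/|s|)$ decay. Continuation into the upper half-plane is then obtained from the functional equation~\eqref{shift_diff_eq}.

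\textbf{Step 1 (Local Taylor extension).} For any $s_0 = \sigma_0 + it_0$ with $\sigma_0 > 0$ and $t_0 < 0$, Lemma~\ref{lem_psi} yields
\[
\left|\frac{\psi^{(n)}(s_0)}{n!}\right| \leq \left(\frac{C(\sigma_0)}{|s_0|}\right)^{n+1},
\]
so the Taylor series of $\psi$ at $s_0$ converges absolutely on the open disk $|s-s_0| < |s_0|/C(\sigma_0)$ and represents the analytic continuation of $\psi$ there.

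\textbf{Step 2 (The $O_B(1/|s|)$ bound).} For a target $s = \sigma + it$ with $\sigma \in [-B, 1]$ and $t < 0$, I take $s_0 = \sigma_0 + it$ with a fixed $\sigma_0 > 1$ calibrated in terms of $B$ and the absolute constant $A$ so that $C(\sigma_0)(\sigma_0 + B) \leq |s_0|/2$. Since $|s_0| \geq |t|$, this inequality holds once $|t|$ exceeds a threshold $T_0(B)$. Summing the Taylor series geometrically gives
\[
|\psi(s)| \leq \frac{C(\sigma_0)}{|s_0| - C(\sigma_0)(\sigma_0-\sigma)} \leq \frac{2\,C(\sigma_0)}{|s_0|},
\]
and since $|s_0| \asymp |t| \asymp |s|$ in this regime, we conclude $|\psi(s)| = O_B(1/|s|)$. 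For $\sigma > 1$ the bound~\eqref{upper_bound_for_psi} handles the estimate directly, and the residual compact region $\{|t| \leq T_0(B),\, \sigma \in [-B, 1]\}$ contributes only a bounded perturbation once analyticity there is in hand.

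\textbf{Step 3 (Continuation to all of $\mathbb{C}$).} The Taylor disks of Step~1 glue into an open set covering the strip $\{\Re s \geq -B,\, \Im s < 0\}$ except possibly for a bounded piece near the real axis; this piece is reached by chained Taylor expansions, with Lemma~\ref{lem_ineq}, now applied to the already-continued $\psi$ with its values at the integer-imaginary translates, supplying the derivative bounds at each new basepoint. To cross into the upper half-plane, observe that for $\Im s \in [0, 1)$ the point $s - i$ lies in the extended domain, so
\[
\psi(s) = \psi(s_*) - \int_{s_*}^{s} \psi^2(w-i)\,dw
\]
computed along a horizontal segment from a point $s_* \in \{\Re w > 0,\, \Im w = \Im s\}$ where $\psi(s_*)$ is known recovers $\psi(s)$. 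Iterating the vertical shift by $i$ covers all of $\mathbb{C}$.

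\textbf{Main obstacle.} The delicate point is the calibration of $\sigma_0$ in Step~2: it must be large enough that the disk radius $|s_0|/C(\sigma_0)$ exceeds the horizontal distance $\sigma_0 + B$ that needs to be bridged, yet small enough that the ensuing bound $C(\sigma_0)/|s_0|$ is of order $1/|s|$. Because $C(\sigma_0) = A(1 + |\log\sigma_0|/\sigma_0)$ blows up as $\sigma_0 \to 0^+$, placing $s_0$ arbitrarily close to the imaginary axis is impossible, but the freedom to take $|t|$ large supplies enough vertical length for $|s_0|$ to dominate and make the tradeoff succeed.
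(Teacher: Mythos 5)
Your proof is correct and follows essentially the same route as the paper: Taylor expansion of $\psi$ around basepoints in the lower right quadrant, majorized by the derivative bounds of Lemma~\ref{lem_psi} to get both the continuation across the imaginary axis and the $O_B(1/|s|)$ decay, followed by iterated use of the integrated functional equation \eqref{shift_diff_eq} to reach the rest of the plane. The only differences are cosmetic (the paper centers its disks at $1-iK$ and exploits that their union contains the region $1+\Im(s)/C(1)\leqslant\Re(s)$, whereas you fix $\sigma_0>1$ and match the imaginary part of the target, and you integrate the shift-differential equation horizontally rather than vertically).
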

\begin{proof}
For \(\Re(s)\geqslant 1\) the estimate of the proposition already follows from (\ref{upper_bound_for_psi}).
By this estimate of Lemma \ref{lem_psi}  we have that the Taylor
series
\[
\psi(s)=\sum_{j=0}^\infty\frac{\psi^{(j)}(1-iK)}{j!}\bigl(s-(1-iK)\bigr)^j
\]
converges in the circle $|1-iK-s|<|1-iK|/C(1)$ and moreover in this
circle holds the estimate
\[
|\psi(s)|\leqslant
\sum_{j=0}^\infty\left(\frac{C(1)}{|1-iK|}\right)^{j+1}\bigl|s-(1-iK)\bigr|^j
=\frac{C(1)}{|1-iK|}\frac{1}{1-\frac{C(1)}{|1-iK|}|1-iK-s|}.
\]
This means that $\psi(s)$ can be analytically continued to the
region of complex plane that consists of such \(s\) that are contained in any of  the circles of radius $|1-iK|/C(1)$ with center at \(1-iK\) with some \(K>0\). Note that all complex number \(s\) with negative imaginary part such that $1+\frac{\Im (s)}{C(1)}\leqslant \Re(s)$ satisfy such condition. See the figure \ref{fig:continuation}.
\begin{center}
	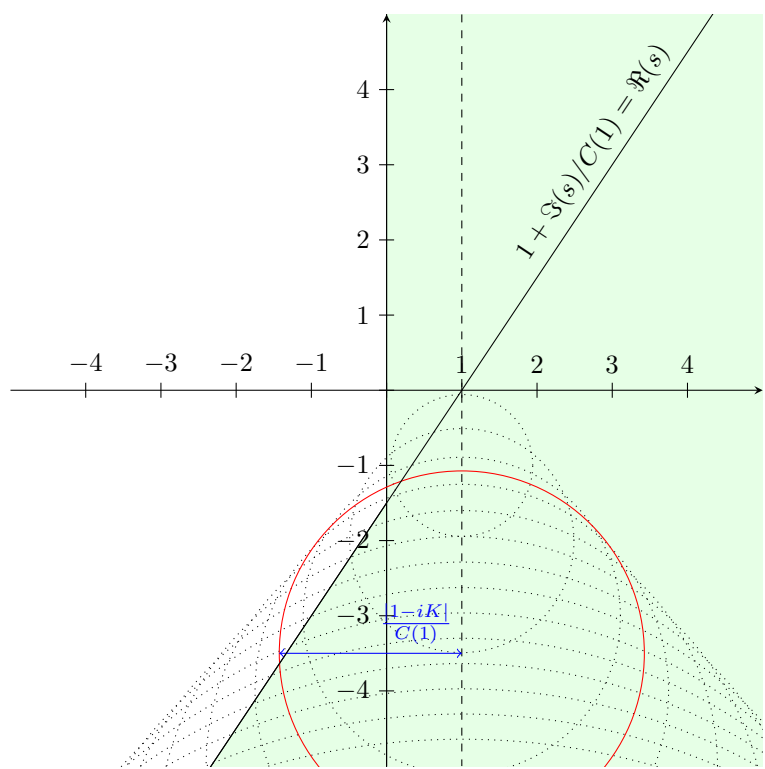
\begin{figure}[htbp]

\begin{tikzpicture}
	\usetikzlibrary{arrows,patterns}
	\tikzstyle{myedgestyle} = [-stealth]
\clip(-5,5) rectangle (5,-5);

\def \c{1.5}

\draw[fill=green!10] (1-4,0-4*\c) -- (5,4*\c)--(5,-0-4*\c)--cycle;
\draw[fill=green!10]   (0,6) rectangle (6,-6);
\foreach \x in {1,2,3,4,5,6,7,8,9,10,11,12,13,14}{\pgfmathparse{sqrt(1+\x^2)/\c}\let\a\pgfmathresult;\draw[dotted]  (1,-\x ) circle (\a);}
\draw [-stealth](-5,0) -- (5,0);
\draw [-stealth](0,-5) -- (0,5);
\foreach \x in {-4,-3,-2,-1,0,1,2,3,4}{\draw (\x,-0.1) -- (\x,0.1);}
\foreach \x in {-4,-3,-2,-1,0,1,2,3,4}{\draw (-0.1,\x) -- (0.1,\x);}
\foreach \x in {-4,-3,-2,-1,1,2,3,4}{\draw (\x,0.1)node[above]{$\x$};}
\foreach \x in {-4,-3,-2,-1,1,2,3,4}{\draw (-0.1,\x)node[left]{$\x$};}

\draw[dashed] (1,5) -- (1,-5);

\def\x{3.5}
\pgfmathparse{sqrt(1+\x^2)/\c}\let\a\pgfmathresult
\draw[color=red]  (1,-\x ) circle (\a);
\draw[<->,blue] (1,-\x) -- (1-\a,-\x)node[near start,above,blue]{$\frac{|1-iK|}{C(1)}$};
\draw[black] (1-4,0-4*\c) -- (5,4*\c)node[near end,above,sloped]{$1+\Im(s)/C(1)=\Re(s)$};

\end{tikzpicture}
\caption{The continuation of \(\psi(s)\) to the left half-plane}
  \label{fig:continuation}
\end{figure}
\end{center}
Note that $\psi(s)$ satisfies a shift-differential
equation (\ref{shift_diff_eq}) which is by integrating its both sides yields the  identity
\[
\psi(s)=\psi(s-i)+i\int_{0}^{1}\psi(s-i-it)^2\,dt.
\]
The repeated application of the above identity allows us to   continue \(\psi(s)\)  analytically to the whole
complex plane.

\end{proof}

We have already proven that for $\Im(s)\leqslant 0$ we have
\[
\psi(s)=O\left(\frac{1}{|s|}\right) 
\]
when $\Re(s)\geqslant -H$  with an arbitrary fixed $H>0$. Let us now
try to obtain a similar estimate for the values of $s$ lying in the
upper half-plane.

\begin{lem}
\label{bound_sup} For all $\sigma>0$ we have
\[
\sup_{y\in \mathbb{R}}|\psi(\sigma+iy)|< \frac{1}{\sigma}.
\]
\end{lem}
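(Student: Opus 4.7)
The plan is to exploit the integral representation
\[
\psi(s)=\int_0^\infty f(t)\,e^{2it\log t}\,e^{-st}\,dt
\]
directly. For $s=\sigma+iy$ with $\sigma>0$, the phase $e^{2it\log t-iyt}$ has modulus one, so the triangle inequality gives the $y$-free bound
\[
|\psi(\sigma+iy)|\leqslant \int_0^\infty|f(t)|\,e^{-\sigma t}\,dt.
\]
Since this upper bound is independent of $y$, it is enough to show that the right-hand side is strictly smaller than $1/\sigma=\int_0^\infty e^{-\sigma t}\,dt$.

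For that, I would invoke the fact that $f$ is the characteristic function of a probability distribution, so $|f(t)|\leqslant 1$ for every real $t$, with equality at $t=0$. Because the limiting Quicksort distribution is not a Dirac mass (it has a bounded density by \cite{tan_hadjicostas}), $|f|$ is not identically one; by continuity of $f$, the set $\{t>0:|f(t)|<1\}$ has positive Lebesgue measure. Alternatively, one may simply recall the Fill--Janson decay $|f(t)|\to 0$ noted in the introduction, which trivially forces $|f(t)|<1$ on a neighborhood of every sufficiently large $t$. In either case the strict pointwise inequality together with $e^{-\sigma t}>0$ yields
\[
\int_0^\infty|f(t)|\,e^{-\sigma t}\,dt<\int_0^\infty e^{-\sigma t}\,dt=\frac{1}{\sigma},
\]
and taking the supremum over $y\in\mathbb{R}$ on the left completes the proof.

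I do not expect any serious obstacle: the only point requiring a sentence of justification is that $|f(t)|$ is not almost-everywhere equal to $1$, and this is immediate from the existence of a density (or from the already cited polynomial decay of $f$). Everything else is just the standard modulus estimate of a Laplace integral.
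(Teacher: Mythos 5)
Your proof is correct, but it follows a genuinely different route from the paper's. You keep $|f(t)|$ inside the integral after applying the triangle inequality, obtaining the $y$-independent bound $\sup_y|\psi(\sigma+iy)|\leqslant\int_0^\infty|f(t)|e^{-\sigma t}\,dt$, and then get strictness from $\int_0^\infty\bigl(1-|f(t)|\bigr)e^{-\sigma t}\,dt>0$, which holds because $|f|<1$ on a set of positive measure (the limit law is non-degenerate, or one can quote the polynomial decay of $f$ already recorded in the introduction). The paper instead discards $|f(t)|$ immediately, getting only the non-strict bound $1/\sigma$, and then runs the classical equality-case analysis of the triangle inequality: it first shows the supremum is attained at some finite $y_0$ (using the decay $\psi(\sigma+iy)=O(1/|y|)$ from the integration-by-parts estimate), and then shows that equality would force $e^{i\theta}f(t)e^{2it\log t}e^{-iy_0t}\equiv1$, hence $\psi(s)=e^{-i\theta}/(s-iy_0)$, which is incompatible with the functional equation $-\psi'(s)=\psi^2(s-i)$. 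Your argument is shorter and avoids both the attainment of the supremum and the functional equation, at the cost of importing one external fact about $f$ (non-degeneracy of the limit law, or its known decay); the paper's argument is more self-contained within its own machinery, ruling out the extremal case purely through the equation satisfied by $\psi$. Both establish exactly what the next lemma needs, namely $\sup_y|\psi(1+iy)|=1-\varepsilon$ with $\varepsilon>0$.
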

\begin{proof}  The proof of the lemma relies on a standard trick that is used to
	prove that if a modulus of a characteristic function of a random
	variable reaches $1$ at some point other than $0$ then the random
	variable has a lattice distribution.
	We have
	\[
	|\psi(\sigma+iy)|=\left|\int_0^\infty f(t)e^{2it\log
		t}e^{-(\sigma+it)t}\,dt\right|\leqslant \left|\int_0^\infty
	e^{-\sigma t}\,dt\right|\leqslant \frac{1}{\sigma},
	\]
	for $\sigma>0$.  Note that the estimate \ref{upper_bound_for_psi} for fixed $\sigma>0$ implies that $\psi(\sigma+iy)=O(1/|y|)$ as $|y|\to \infty$
 which means that the supremum of \(|\psi(\sigma+iy)|\) will be reached
on some finite point $y_0=y_0(\sigma)$. It remains to prove that this supremum cannot be equal to \(1/\sigma\). Indeed if
\[
|\psi(\sigma+iy_0)|= \frac{1}{\sigma},
\]
then recalling the definition of \(\psi\) we can rewrite this identity as
\[
\left|\int_0^\infty f(t)e^{2it\log t}e^{-\sigma t}
e^{-iy_0t}\,dt\right|=\int_0^\infty e^{-\sigma t}\,dt
\]
or equivalently
\[
e^{i\theta}\int_0^\infty f(t)e^{2it\log t}e^{-\sigma t}
e^{-iy_0t}\,dt=\int_0^\infty e^{-\sigma t}\,dt
\]
for some real $\theta$. Since $|f(t)|\leqslant 1$ taking the real  part
of the above equation  we have
\[
\Re\bigl(e^{i\theta}f(t)e^{2it\log t}e^{-iy_0t}\bigr)\equiv 1.
\]
The above identity together  with the fact that \(\bigl|e^{i\theta}f(t)e^{2it\log t}e^{-iy_0t}\bigr|\leqslant 1\) implies that \(\Im\bigl(e^{i\theta}f(t)e^{2it\log t}e^{-iy_0t}\bigr)\equiv 0\) and thus
\[
e^{i\theta}f(t)e^{it\log t}e^{-iy_0t}\equiv 1.
\]
Which means that
\[
\psi(s)=\int_0^\infty f(t)e^{2it\log t}e^{-s t}
\,dt=e^{-i\theta}\int_0^\infty e^{-s t} e^{iy_0t}\,dt=\frac{e^{-i\theta}}{s-iy_0}.
\]
However such function does not satisfy the equation
$-\psi'(s)=\psi^2(s-i)$.
\end{proof}
With the help of the just proven lemma we can obtain an upper bound
for $\psi(s)$ in the vicinity of the imaginary line $\Im(s)=0$.
\begin{lem} We have
\[
|\psi(s)|\leqslant \frac{1-\varepsilon}{1-|\Re(s)-1|(1-\varepsilon)},
\]
for $s$ belonging to the vertical strip
$-\frac{\varepsilon}{1-\varepsilon}<\Re(s)<\frac{2-\varepsilon}{1-\varepsilon}$,
where $\varepsilon$ is such that $\sup_{y\in
\mathbb{R}}|\psi(1+iy)|= 1-\varepsilon$.
\end{lem}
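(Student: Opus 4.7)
The plan is to apply Lemma~\ref{lem_ineq} at points lying on the vertical line $\Re(s)=1$, and then expand $\psi$ as a Taylor series in the horizontal direction around such a base point. Concretely, for each real $y$ and each $n\geqslant 0$, Lemma~\ref{lem_ineq} gives
\[
\bigl|\psi^{(n)}(1+iy)\bigr|\leqslant n!\Bigl(\max_{r\in\{0,1,\ldots,n\}}\bigl|\psi(1+i(y-r))\bigr|\Bigr)^{n+1},
\]
and since every shifted argument $1+i(y-r)$ still has real part $1$, the standing hypothesis $\sup_{y\in\mathbb{R}}|\psi(1+iy)|=1-\varepsilon$ bounds the inner maximum by $1-\varepsilon$. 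Hence $|\psi^{(n)}(1+iy)|\leqslant n!(1-\varepsilon)^{n+1}$, uniformly in $y$.

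Next, I would fix an arbitrary $s$ and decompose it as $s=1+iy+u$ with $y=\Im(s)$ and $u=\Re(s)-1$ real, and consider
\[
\psi(s)=\sum_{n=0}^\infty\frac{\psi^{(n)}(1+iy)}{n!}\,u^n.
\]
The derivative bound makes this series majorized by a geometric one, giving absolute convergence precisely when $|u|(1-\varepsilon)<1$ together with
\[
|\psi(s)|\leqslant \sum_{n=0}^\infty(1-\varepsilon)^{n+1}|u|^n=\frac{1-\varepsilon}{1-(1-\varepsilon)|\Re(s)-1|}.
\]
A short algebraic check verifies that the convergence condition $|u|(1-\varepsilon)<1$ translates exactly into the strip $-\tfrac{\varepsilon}{1-\varepsilon}<\Re(s)<\tfrac{2-\varepsilon}{1-\varepsilon}$ stated in the lemma.

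There is no substantial obstacle; the conceptual point, and the only thing worth flagging in the write-up, is that one must justify that the resulting Taylor series actually represents $\psi$ on \emph{both} sides of the line $\Re(s)=1$. To the right of that line $\psi$ is defined by its Laplace integral and the series is just its local expansion, while to the left the series must agree with the analytic continuation provided by the previous proposition; since both are analytic and coincide on the base line, they agree throughout the strip of convergence. The key gain over Lemma~\ref{lem_psi} is that the bound $1-\varepsilon$ is independent of $y$, so the disk of convergence of the Taylor series has a \emph{uniform} radius, large enough to cross $\Re(s)=0$ into the left half-plane.
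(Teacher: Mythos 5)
Your proposal is correct and follows essentially the same route as the paper: bound $|\psi(1+iy)|$ by $1-\varepsilon$ uniformly in $y$, feed this into Lemma \ref{lem_ineq} to get $|\psi^{(n)}(1+iy)|\leqslant n!(1-\varepsilon)^{n+1}$, and sum the Taylor series about $1+i\Im(s)$ as a geometric series. Your added remark about why the series represents the analytic continuation of $\psi$ to the left of $\Re(s)=1$ is a point the paper leaves implicit, and is correctly resolved.
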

\begin{proof}
Applying the inequality of Lemma \ref{bound_sup} with $\sigma=1$ we
have
\[
\psi(1+iy)\leqslant 1-\varepsilon
\]
for all $y\in \mathbb{R}$ and some fixed $\varepsilon>0$. Hence
inequality of Lemma \ref{lem_ineq} yields
that
\begin{equation}
\label{psi_derivatives_bound}
\psi^{(k)}(1+iy)\leqslant
k!(1-\varepsilon)^{k+1}
\end{equation}
uniformly for $y\in \mathbb{R}$. This implies that $\psi(s)$ is
bounded in the vicinity  of the imaginary line $\Re(s)\geqslant
-\varepsilon'$ where $\varepsilon'<\varepsilon$. Indeed by Taylor
expansion
\[
\psi(s)=\sum_{k=0}^\infty \frac{\psi^{(k)}(1+iy)}{k!}(s-1-iy)^k
\]
Thus
\[
|\psi(s)|\leqslant \sum_{k=0}^\infty
(1-\varepsilon)^{k+1}|s-1-iy|^{k}=\frac{1-\varepsilon}{1-|s-1-iy|(1-\varepsilon)}
\]
for $|s-1-iy|<\frac{1}{1-\varepsilon}$. Suppose
$|\Re(s)-1|<\frac{1}{1-\varepsilon}$ then  taking  $y=\Im(s)$ we get
\[
|\psi(s)|\leqslant \frac{1-\varepsilon}{1-|\Re(s)-1|(1-\varepsilon)},
\]
for all $s$ lying in the strip $|\Re(s)-1|<\frac{1}{1-\varepsilon}$.
\end{proof}

 A more precise estimate can
be obtained combining the obtained two upper bounds for derivatives of $\psi(s)$.

\begin{lem}
We have an upper bound
\[
|\psi(s)|=O \left(\frac{1}{|s|}\right)
\]
in the region $\Re(s)>-\frac{\varepsilon'}{1-\varepsilon'}$. Where
$\varepsilon'$ is a fixed number that
$0<\varepsilon'<\varepsilon=1-\sup_{y\in \mathbb{R}}|\psi(1+iy)|$, the constant in the symbol  depends on $\varepsilon'$ only.
\end{lem}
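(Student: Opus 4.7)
The region $\Re s > -\varepsilon'/(1-\varepsilon')$ splits naturally into the lower half-plane $\Im s \le 0$ and the upper half-plane $\Im s > 0$. For $\Im s \le 0$ the estimate is already contained in the Proposition: choosing $B = \varepsilon'/(1-\varepsilon')$, the bound $\psi(s) = O_B(1/|s|)$ becomes exactly $\psi(s) = O(1/|s|)$ with a constant depending only on $\varepsilon'$.

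For $\Im s > 0$ the plan is to use the two derivative bounds of the hint indirectly, through an integral representation of $\psi$ that lets one induct on $\lceil\Im s\rceil$. Integrating the equation $-\psi'(u) = \psi^2(u-i)$ horizontally from $u = s$ to $+\infty$, and using that $\psi(u) \to 0$ as $\Re u \to \infty$ (which follows from (\ref{upper_bound_for_psi})), gives
\[
\psi(s) = \int_\sigma^\infty \psi^2\bigl(t + i(\Im s - 1)\bigr)\,dt.
\]
The integrand has imaginary part $\Im s - 1$, strictly smaller than $\Im s$, so an induction on $\lceil \Im s \rceil$ reduces the bound to the lower half-plane case already handled. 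In the inductive step, if $|\psi(u)| \le K/|u|$ for all $u$ in the strip with $\Im u < \Im s$, then
\[
|\psi(s)| \le K^2 \int_\sigma^\infty \frac{dt}{t^2 + (\Im s -1)^2} \le \frac{\pi K^2}{2\,|\Im s - 1|} = O\!\left(\frac{K^2}{|s|}\right),
\]
because in the strip $|s| \sim |\Im s|$ in the only regime where the claimed estimate is non-trivial.

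The main obstacle is keeping the implicit constant under control. The naive bootstrap gives a recursion of shape $K_n \le \tfrac{\pi}{2} K_{n-1}^{2}$ across successive unit imaginary shifts, which has fixed point $K = 2/\pi$ but blows up once the starting constant from the Proposition exceeds $2/\pi$. To force all $K_n$ to collapse into a single quantity depending only on $\varepsilon'$, one has to blend the integrated equation with the pointwise bound $|\psi(u)| \le 1/\Re u$ of Lemma~\ref{bound_sup} and with the uniform strip estimate of the previous lemma. These two pointwise bounds descend from the two derivative inequalities of Lemma~\ref{lem_psi} and (\ref{psi_derivatives_bound}) mentioned in the hint, and they guarantee that the Taylor-type tails arising at each step of the induction decrease geometrically, so that the accumulated error at the final step is a single $O(1/|s|)$ with a constant depending only on $\varepsilon'$.
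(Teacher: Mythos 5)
Your split into lower and upper half-planes is reasonable, and the integrated form of the shift-differential equation is correct, but the upper half-plane argument has a genuine, unclosed gap --- the very one you name yourself. The induction on $\lceil \Im s\rceil$ runs over unboundedly many steps (one per unit of imaginary part), and each step squares the constant, $K_n \le c K_{n-1}^2$. Such a recursion stays bounded only if the starting constant is at most $1/c$, and nothing in the Proposition or in Lemma \ref{bound_sup} places the initial constant below that threshold; for a generic starting value the $K_n$ blow up doubly exponentially. Your closing paragraph asserts that ``blending'' the integrated equation with the pointwise bounds makes ``Taylor-type tails decrease geometrically,'' but there are no Taylor tails in your integral recursion, and you never exhibit a mechanism that replaces the squared constant by one independent of the step. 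As written this is a restatement of the difficulty, not a resolution, so the proof is incomplete.

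The paper avoids any induction on the imaginary part. It works on the single vertical line $\Re s=1$, where two bounds hold for \emph{all} $y$, positive or negative: the uniform bound $|\psi(1+iy)|\le 1-\varepsilon$ from Lemma \ref{bound_sup}, and the decaying bound $|\psi(1+iy)|\le D/|y|$ from (\ref{upper_bound_for_psi}). Feeding these into Lemma \ref{lem_ineq} (whose maximum runs over the downward shifts $1+iy-ir$, where both bounds persist and where $|y-r|\ge |y|/2$ whenever $k\le |y|/2$) yields two families of derivative estimates at $1+iy$: $|\psi^{(k)}(1+iy)|\le k!\,(2D/|y|)^{k+1}$ for $k\le |y|/2$, and $|\psi^{(k)}(1+iy)|\le k!\,(1-\varepsilon)^{k+1}$ for all $k$ (this is (\ref{psi_derivatives_bound})). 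A single horizontal Taylor expansion from $1+i\Im s$ to $s$, with the series split at $k=|y|/2$, then gives $O(1/|y|)$ from the head and $O\bigl(((1-\varepsilon)/(1-\varepsilon'))^{|y|/2}\bigr)$ from the tail, hence $O(1/|s|)$ throughout the strip $|\Re s-1|<1/(1-\varepsilon')$; no constants compound. To salvage your route you would need either to prove that the starting constant lies below the fixed point of the squaring map, or to switch to this one-step Taylor argument.
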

\begin{proof} Putting $\sigma=1$ in our non-uniform bound (\ref{upper_bound_for_psi}) for
$\psi(s)$ we  have
\[
|\psi(1+iy)|\leqslant D/|y|
\]
for some fixed $D>0$. Again by induction  for $k\leqslant |y|/2$ we
have
\[
|\psi^{(k)}(1+iy)|\leqslant k!\left(\frac{2D}{|y|}\right)^{k+1}.
\]
Suppose $|\Re(s)-1|<\frac{1}{1-\varepsilon'}$. Let us take
$y=\Im(s)$. Combining the above upper bound with our previous
uniform estimate (\ref{psi_derivatives_bound}) for the derivatives
of $\psi^{(j)}(1+iy)$ we get
\[
\begin{split}
|\psi(s)|&\leqslant\sum_{k\leqslant |y|/2}
\left(\frac{2D}{|y|}\right)^{k+1}|s-1-iy|^k
+\sum_{k>|y|/2}|s-1-iy|^{k+1}(1-\varepsilon)^k
\\
&\leqslant \frac{2D}{|y|}\frac{1}{1-\frac{2D}{|y|}|s-1-iy|} +
\frac{|s-1-iy|\bigl(|s-1-iy|(1-\varepsilon)\bigr)^{|y|/2}}{1-|s-1-iy|(1-\varepsilon)}
\\
&\leqslant \frac{2D}{|y|-\frac{2D}{(1-\varepsilon')}} +
\frac{\bigl(\frac{1-\varepsilon}{1-\varepsilon'}\bigr)^{|y|/2}}{(1-\varepsilon')\left(1-\frac{1-\varepsilon'}{1-\varepsilon}\right)},
\end{split}
\]
for $|\Re(s)-1|<\frac{1}{1-\varepsilon'}$ and
$|y|>\frac{2D}{1-\varepsilon'}$. Since
$\frac{1-\varepsilon}{1-\varepsilon'}<1$ we have
\[
|\psi(s)|=O\left(\frac{1}{|s|}\right).
\]
\end{proof}
A number of conclusions can be drawn from the estimate of the just
proven lemma.

\begin{proof}[of Theorem \ref{main_thm}] 
The Laplace transform of \(tf(t)e^{2it\log t}\) is \(-\psi'(s)\) so, by inversion formula we have
\[
-f(t)e^{2it\log t} = \frac{1}{2\pi
i}\int_{\sigma-i\infty}^{\sigma+i\infty}\psi'(s)e^{ts}\,ds=
\frac{-1}{2\pi i
t}\int_{\sigma-i\infty}^{\sigma+i\infty}\psi^2(s-i)e^{ts}\,ds
\]
and taking into account that $|\psi(s-i)|\ll 1/|s| $ in the region
$\Re (s)\geqslant -2\eta$ for some fixed $\eta >0$ we can shift the
integration line to the left and obtain
\[
f(t)e^{2it\log t} = \frac{1}{2\pi i
t}\int_{-\eta-i\infty}^{-\eta+i\infty}\psi^2(s-i)e^{ts}\,ds\ll
e^{-\eta t}.
\]

\end{proof}

\begin{proof}[of Corollary \ref{cor}]
 The density is given by formula
\[
p(x)=\frac{1}{2\pi}\int_{-\infty}^\infty f(t)e^{-ixt}\,dt.
\]
	The fact that \(f(t)\) is exponentially decreasing \(|f(t)|\ll e^{-\eta |t|}\) at infinity \(|t|\to \infty\) immediately implies that the integral
	\[
	\frac{1}{2\pi}\int_{-\infty}^\infty f(t)e^{-ist}\,dt.
	\]
	is absolutely convergent in the vicinity of the real line \(|\Im(s)|< \eta\) where it defines an analytic function that coincides with the density of the quicksort distribution \(p(x)\) on the real line \(s=x\in \mathbb{R}\).
\end{proof}
\begin{cor}
The density function  $p(x)$ of the quicksort distribution can have
only finite number of zeros in any finite interval. The same is true for the derivatives of $p(x)$ of all
orders.
\end{cor}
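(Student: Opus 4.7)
The plan is to leverage Corollary \ref{cor}, which states that $p(x)$ extends to an analytic function on the strip $|\Im(s)|<\eta$. This strip is a connected open subset of $\mathbb{C}$, so the identity theorem for analytic functions applies: either the extended function vanishes identically on the strip, or its zero set consists of isolated points. The first alternative is excluded because the restriction to $\mathbb{R}$ is a probability density with integral $1$, hence not identically zero. Therefore the real zeros of $p(x)$ are isolated, and any bounded interval, being compact, can contain only finitely many of them.

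For the derivatives, I would use that a function holomorphic on the strip is automatically $C^\infty$ as a complex function, and all its complex derivatives are again holomorphic on the same strip. In particular, for every $n\geqslant 0$ the real derivative $p^{(n)}(x)$ extends to an analytic function on $|\Im(s)|<\eta$, and the identity theorem argument repeats. What one must check is that none of these derivatives vanishes identically. If $p^{(n)}\equiv 0$ on the strip for some $n\geqslant 1$, then on $\mathbb{R}$ the density $p$ would coincide with a polynomial of degree at most $n-1$; but a nonzero real polynomial is not integrable on $\mathbb{R}$ and the zero polynomial is not a density, so this is impossible. Consequently $p^{(n)}$ has only isolated zeros for every $n$, and the claim for derivatives follows.

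The argument is short and essentially nothing is delicate; the only point needing a brief remark is that the extension provided by Corollary \ref{cor} is genuinely complex-analytic, which is exactly what is asserted there, so the passage from $p$ to $p^{(n)}$ within the strip is free. No further input from the paper's estimates is required.
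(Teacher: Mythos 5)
Your proposal is correct and follows essentially the same route as the paper: analyticity of the extension of $p$ to the strip $|\Im(s)|<\eta$ from Corollary \ref{cor}, the identity theorem to conclude zeros are isolated, and compactness of a bounded interval to get finiteness. You are in fact more careful than the paper's own one-line proof, which does not explicitly verify that $p$ (or its derivatives) is not identically zero and does not treat the derivatives at all; your polynomial-integrability argument for $p^{(n)}\not\equiv 0$ fills that gap cleanly.
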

\begin{proof}
 Since an analytic function that is not identically equal to zero  can have only finite number of zeros in any closed circle \(|s-x|\leqslant r/2\) for any \(x\in \mathbb{R}\), so the density \(p(x)\) can have only finite number of zeros in any finite interval \([x-r/2,x+r/2]\) with all \(x\in \mathbb{R}\).
\end{proof}

\acknowledgements The author sincerely thanks Prof. Hsien-Kuei Hwang for  numerous discussions on the topic of the paper as well as for his hospitality during the author's visits to Academia Sinica (Taiwan). The author also thanks the anonymous referee for  pointing out numerous errors in the previous draft of the paper and his suggestions that lead to considerable improvement in the exposition of the results  of the paper.
\bibliographystyle{abbrvnat}

\end{document}